\newcommand{\eqdef}{\stackrel{\scriptscriptstyle\rm def}{=}}
\newtheorem{theorem}{Theorem}
\newtheorem{proposition}[theorem]{Proposition}
\newtheorem{corollary}[theorem]{Corollary}
\newtheorem{lemma}[theorem]{Lemma}
\newtheorem*{remarks}{Remarks}
\newcommand{\beha}{\begin{enumerate}}
\newcommand{\behe}{\end{enumerate}}
\renewcommand{\epsilon}{\varepsilon}
\newcommand{\ov}{\overline}
\newcommand{\bR}{{\mathbb R}}
\newcommand{\bC}{{\mathbb C}}
\newcommand{\bN}{{\mathbb N}}
\newcommand{\cH}{\EuScript{H}}
\newcommand{\cF}{{\mathcal F}}
\newcommand{\cA}{{\mathcal A}}
\newcommand{\cB}{{\mathcal B}}
\newcommand{\cC}{{\mathcal C}}
\newcommand{\cL}{{\mathcal L}}
\newcommand{\cX}{{\mathcal X}}
\newcommand{\cY}{{\mathcal Y}}
\newcommand{\cZ}{{\mathcal Z}}
\newcommand{\RE}[1]{(\ref{#1})}
\def\lumin{\lambda_u^{\text{\rm min}}}
\def\lumax{\lambda_u^{\text{\rm max}}}
\def\lsmin{\lambda_s^{\text{\rm min}}}
\def\lsmax{\lambda_s^{\text{\rm max}}}
\DeclareMathSymbol{\varnothing}{\mathord}{AMSb}{"3F}
\renewcommand{\emptyset}{\varnothing}
\thanks{This work was partially supported by a grant from the Simons Foundation (\#209846 to Christian Wolf).}
\begin{document}

\title{ DIFFEOMORPHISMS WITH STABLE MANIFOLDS AS BASIN BOUNDARIES}

\begin{abstract}
In this paper we study the dynamics of a family of diffeomorphisms in $\bR^2$ defined by
$
F(x,y)=(g(x)+h(y),h(x)),
$
where $ g(x) $ is a unimodal $C^2$-map which has the same dynamical properties
as the logistic map $P(x)=\mu x(1-x)$, and  $h(x) $ is a $C^2$ map which is a small perturbation of a linear map. For certain maps of this form  we show  that there are exactly two periodic points, namely an attracting fixed point and a saddle fixed point and the boundary of the basin of attraction is the stable manifold of the saddle.
 The basin boundary also has the same regularity as $F$, in contrast to the frequently observed fractal nature of basin boundaries. To establish these results we describe 
the orbits under forward and backward iteration of every point in the plane. 

\end{abstract}

\date{\today}

\author{Sandra Hayes} \address{The City College of New York}
\email{shayes@gc.cuny.edu}

\author{Christian Wolf}\address{The City College of New York}
\email{cwolf@ccny.cuny.edu}

\keywords{H\'enon-like maps,  diffeomorphisms in the plane, periodic orbits, stable manifolds}
\subjclass[2000]{Primary: 37C05, 37C25, 37E30}

\maketitle

\section{Notation and results}
If a diffeomorphism of $\bR^2$ has exactly two periodic points and one is attracting while the other one is a saddle point, it is of interest to find out when the boundary of the basin of attraction is the stable manifold of the saddle. Only in a few special cases of H\'enon maps (see \cite{HW}) this has  been proved, although in the standard literature on H\'enon maps (see for example the books \cite{ASY,R}) it is often said to be true based on computer experiments. 
In this paper we study $C^2$-diffeomorphisms  $F:\bR^2\to \bR^2$ defined by
\begin{equation}\label{eqgenfam}
F(x,y)=(g(x)+h(y),h(x)),
\end{equation}
where $ g(x) $ is a unimodal $C^2$-map which has the same dynamical properties
as the logistic map $P(x)=\mu x(1-x)$, and  $h(x) $ is a $C^2$-map with $h(0) = 0$ which is a small perturbation of a linear map. For certain maps of this type we show that $F$
has precisely two periodic orbits, one of which is an attracting fixed point and the other one is a saddle point. Moreover, the stable manifold of the saddle is the basin boundary of the attracting fixed point. To establish this result we  derive a  description of the dynamics under forward and under backward iteration for all points in the plane.

The strategy in this paper is to consider first a prototype family of diffeomorphisms of form \eqref{eqgenfam}. Namely, we consider a two-parameter family of H\'enon maps defined by
\begin{equation}\label{defhenon}
F_{\delta,\mu}(x,y)=(\mu x (1-x)+\delta y, \delta x),
\end{equation}
where $\mu$ and $\delta$ are real parameters with $\mu>0$ and $\delta\not=0$.
We then develop for certain parameters $\mu$ and $\delta$ a complete description of all possible orbits under forward as well as backward iteration. Finally, we consider more general maps of the form \eqref{eqgenfam} and explain how to obtain results similar to those for H\'enon maps.
 
For certain parameters  $(\delta,\mu)$ for which $F_{\delta,\mu}$ has exactly two periodic points, namely an attracting fixed point and a saddle fixed point, we show that the boundary of the basin of attraction is the stable manifold of the saddle. Our main result for H\'enon maps is the following.
\begin{theorem}
For certain parameters $(\delta,\mu)$ the following holds.
\begin{enumerate}\label{thmainthm}
\item[(i)] The fixed points $\alpha$ and the origin 
$(0,0)$ are the only periodic points of $F = F_{\delta,\mu}$. 
\item[(ii)]
 The boundary  $\partial W^s(\alpha)$ of the basin of  $\alpha$ is the stable manifold $W^s(0,0)$ of the origin.
  \item[(iii)]
 The filled Julia set $K$ of points with bounded forward and backward orbits for $F$ is
 $\{\alpha,(0,0)\}\cup (W^s(\alpha)\cap W^u(0,0))$.
\end{enumerate}

 \end{theorem}
 The origin is also an unstable one-sided flip saddle point meaning one eigenvalue is negative and only one connected component of the punctured unstable manifold $W^{u}(0,0)\setminus (0,0)$ of $(0,0)$ meets $K$. \\

An immediate consequence of this theorem is the following.
\begin{corollary}\label{cormosm}
For each $(\delta,\mu)$ as in Theorem 1 the map $F_{\mu,\delta}$ is  a Morse-Smale diffeomorphism.
\end{corollary}

Note that $F_{\delta,\mu}$ is conjugate to $F_{-\delta,\mu}$ via $G(x,y)=(x,-y)$. Therefore, for the area decreasing case $0 \not = |\delta| < 1$ we only need to consider  $0 < \delta <1$. The area increasing case  will have a similar description due to the conjugacy of $F^{-1}$.  Thus, for the non-area preserving case we only have to consider parameters for which $F_{\mu,\delta}$ has two distinct fixed points, namely we consider parameters in
\begin{equation}\label{H11}
H_{1,1} = \{ (\delta, \mu):  0 < \delta < 1 \quad and\quad  1- \delta^2  < \mu < 3( 1 - \delta^2)\}
\end{equation}

Next we consider the general  case of diffeomorphisms of the form \eqref{eqgenfam}. We assume that $g:\bR\to \bR$ is a $C^2$-unimodal map with $g(0)=0, g(1)=0, g([0,1])\subset [0,1)$, $g'(0)>1$ and $g'(1)<-1$. We also assume that there exists $\gamma<0$ such that $g''(x)<\gamma$ for all $x\in \bR\setminus (0,1)$. Finally, we assume that there exists an attracting fixed point $x=x_g\in (0,1)$ of $g$ with $W^s(x)=(0,1)$.  For $\delta\in \bR$ we denote by $L_\delta$ the linear map $x\mapsto \delta x$.
Moreover, if $f:\bR\to\bR$ is a $C^2$-map, we write $||f||_2=\sup_{x\in\bR}\{|f(x)|,|f'(x)|,|f''(x)|\}$. Note that $||.||_2$ may be infinite and is, in particular  not a norm.
For maps $g$ with the properties above we have the following result.
\begin{theorem}\label{thmgeneral}
There exists $\delta(g)>0$ such that for all $0<\delta<\delta(g)$ there is $\epsilon>0$ such that for any $C^2$-map $h:\bR\to \bR$ with $h(0)=0$ and $||h-L_\delta||_2<\epsilon$ the following holds.
\begin{enumerate}
\item[(i)] 
The map $F(x,y)=(g(x)+h(y),h(x))$ is a $C^2$-diffeomorphism of $\bR^2$.
\item[(ii)]
The map $F$ has precisely two periodic points, both of which are fixed points.  The origin 
$(0,0)$ is a saddle point and the other fixed point $\alpha$ is attracting.
\item[(iii)]
 The boundary  $\partial W^s(\alpha)$ of the basin of  $\alpha$ is the stable manifold $W^s(0,0)$ of the origin.
  \item[(iv)]
 The  set $K$ of points with bounded forward and backward orbits for $F$ is
 $\{\alpha,(0,0)\}\cup (W^s(\alpha)\cap W^u(0,0))$.
\end{enumerate}

 \end{theorem}

\section{The dynamics of $F_{\mu,\delta}$ for small $\delta$}
The purpose of this section is to prove Theorem \ref{thmainthm}. We construct a partitioning of $\bR^2$ and use it to investigate the forward and backward orbits of every point. A key ingredient of the proof is that the dynamics of the maps $F_{\delta, \mu}$ is controlled by the set $K$ of points with bounded (forward and backward) orbits. Unless stated otherwise we use the maximum norm in  $\bR^2$. Points with backward (resp. forward ) orbits escaping to infinity under this norm will simply be denoted by $W^u(\infty)$ (resp. $W^s(\infty)$).
Throughout this section we use as a standing assumption that $(\delta,\mu)\in H_{1,1}$ defined in \eqref{H11}. We begin by listing some elementary properties of  $F_{\delta, \mu}$ that follow from straight forward  calculations.
Let  $ F=F_{\delta,\mu}$ be as in \eqref{defhenon}. Then
\begin{enumerate}
\item[(i)]  The eigenvalues of the Jacobian matrix 
\begin{equation}
DF(x, y )= 
\left( \begin{array}{ccc}
\mu - 2\mu x & \delta  \\
\delta & 0 \\
 \end{array} \right)
\end{equation}
are given by 
\begin{equation}  
\lambda_1 = \lambda_1(x,\delta, \mu)= \frac{(1 - 2x)\mu - \sqrt{\mu^2(2x-1)^2 + 4 \delta^2}}{2},
\end{equation} and
\begin{equation} 
\lambda_2 = \lambda_2(x, \delta, \mu) 
 \frac{(1 - 2x)\mu + \sqrt{\mu^2(2x-1)^2 + 4 \delta^2}}{2}.
\end{equation}
\item[(ii)]
We have that $\lambda_1 < 0 $ and $ \lambda_2 > 0 $ for every $x $ and $ \mu$ when $\delta \not= 0$.
\item[(iii)] 
For the origin $(0,0)$, $\lambda_2 > 1$ if and only if $\delta^2 > 1-\mu$ and $\lambda_1 > -1$ if and only if $\delta^2< 1+ \mu$.
\item[(iv)]  Let $\mu > 1$ and $\delta \not= 0$. Then the origin is a flip saddle fixed point for $F$ if and only if 
$\delta^2 < 1 + \mu$.
\item[(v)] If $1 < \mu < 3$, then the fixed point $\alpha = (x_{\alpha}, y_{\alpha}) = (x_\mu + \frac{\delta^2 }{\mu}, \delta x_\alpha) = (1- \frac{1}{\mu} + \frac{\delta^2}{\mu}, \delta x_\alpha)$ is attracting if and only if  
$\mu < 3(1-\delta^2).$
\item[(vi)] The inverse  of $F$ is given by the formula $F^{-1}(x,y)=$
\begin{equation}
(\delta^{-1}y, \delta^{-1}[x-P_\mu(\delta^{-1}y)])=
(\delta^{-1}y,\delta^{-1}[x-\mu \delta^{-1}y(1-\delta^{-1}y)]),
\end{equation}
where $P_\mu(x)=\mu x(1-x)$.
\end{enumerate}
Our goal is to partition the region above the line $y = 2\delta$  into 3 parts. Similarly for the region below $y = - 2\delta$.  We define
\[
W_\delta=\{(x,y):  |y|\geq 2\delta, |y|\geq \delta|x|\}.
\]
\begin{lemma}\label{lem4}
Let $1<\mu<3$ and let $0<\delta<\max\{\mu-1,1\}.$  
Then $W_{\delta}\subset W^u(\infty)$.
 \end{lemma}
\begin{proof}
Let $(x,y)\in W_\delta$ and  $F^{-n}(x, y) = (x_{-n}, y_{-n}).$ It will be shown that $|y_{-n}|\to \infty$ as $n\to\infty$.
\begin{equation}\label{kmin}
\begin{split}
|y_{-1}| = & \left| \delta^{-1}[x-\mu \delta^{-1}y(1-\delta^{-1}y)]\right|\\
\geq & \delta^{-1} \left[ \mu |\delta^{-2} y^2| - |x|-\mu |\delta^{-1} y |\right]\\
\geq& \delta^{-1}\  |y| \left[ 2\mu \delta^{-1}  - \delta^{-1}-\mu \delta^{-1}  \right]\\
\geq &  \delta^{-1}\  |y| \left[\delta^{-1}\mu-\delta^{-1}\right]\\
\geq & \delta^{-1}\  |y|\ [\delta^{-1}(\mu-1)] > |y|,
\end{split}
\end{equation}
since   $\delta^{-1}(\mu-1)>1$ when $0<\delta<\mu-1$. Recall that $x_{-1}=\delta^{-1}y$. Therefore, \eqref{kmin} implies that $(x_{-1},y_{-1})\in W_\delta$. It now follows by induction (using \eqref{kmin})  that the sequence  $(|y_{-n}|)_n$ is strictly increasing and $|y_{-n}|\to \infty$ as $n\to\infty$.
\end{proof}
The remaining two parts of the region above  $y = 2\delta$ will be treated separately.  That part below $y = -\delta x$ for  $x\leq -2$ is in $W^s(\infty)$  as follows from the next Lemma when $\delta$ is small enough.
\begin{lemma}\label{betalem}
Let $1<\mu<3$. Define $\beta=\beta(\delta)=\delta (\mu-1)^{-1}$. Then for all $(x,y)\in \bR^2$ with $x\leq \min\{-\beta y, 0\}$ and $(x,y)\not=(0,0)$ we have $(x, y) \in W^s(\infty)$. \end{lemma}
\begin{proof}
Let $(x,y)\in \bR^2$ with $x\leq \min\{-\beta y, 0\}$. First,  we consider  $y\geq 0$ in which case $x\leq -\beta y$. We will show that $x_{n}\to -\infty$ as $n\to \infty$ for  $(x_n,y_n) = F^n(x,y).$ Since
\begin{equation}
x_{1}=\mu x- \mu x^2 + \delta y \leq x-\mu x^2+[\mu-1 -\delta \beta^{-1}] x\leq       x-\mu x^2
\end{equation}
that follows by induction. The case $y<0$ follows similarly with the simplification that $y_{n}<0$ for all $n\in\bN$. 
\end{proof}
It follows from Lemma \ref{betalem} that every point $(x, y)$ with $x < -2$ and $2\delta < y < -\delta x$ is in $ W^s(\infty)$ when $\delta < \sqrt{\mu-1}.$
The region above $y = 2\delta$ and below $y = \delta x$ for $x > 2$ is in $W^s(\infty)$ as follows from the next Lemma.
\begin{lemma}\label{lem5}
Let $1<\mu<3$ and let $0<\delta<\delta^*\eqdef\sqrt{\mu-1}$. Let $ x\geq 2, 0\leq y \leq \delta x$. Then $(x, y) \in W^s(\infty)$.
\end{lemma}
\begin{proof} 
Let $\beta$ be as in Lemma \ref{betalem}. Then
\begin{equation}\label{asd1}
x_1=\mu x(1-x)+\delta y \leq -\mu x +\delta^2 x \leq -\mu x + (\mu-1)x= -x
\end{equation} 
On the other hand,
\begin{equation}\label{asd2}
-\beta y_1=-\frac{\delta}{\mu-1} \delta x \geq -x \geq x_1.
\end{equation}
Thus,  the point $(x_1,y_1)$ satisfies the conditions of Lemma \ref{betalem} and  $(x_1, y_1) $ is in $ W^s(\infty)$ .
\end{proof}
From Lemmas 4, 5 and 6 we conclude that above the line $y = 2\delta$ either the backward or the forward orbits escape to $\infty$. Therefore, $K$ must lie below $y = 2\delta$.

Now we will partition the strip $-\infty < x < \infty, 0 < y < 2\delta$ and investigate the orbits in each partition. Note that the region below $y = 2\delta$ and above the $x$-axis for $x\geq2$ is in $W^s(\infty)$ by Lemma \ref{lem5}. Next, we consider
\begin{equation}\label{Adelta}
A_\delta=\{(x,y): 0\leq x\leq 1, 0\leq y\leq 2\delta\}\setminus \{(0,0)\}
\end{equation}
for $\delta > 0$ and show that it is in the basin $W^s (\alpha)$ of attraction. We will show the existence of a polydisk with center $\alpha$ that is contained in $W^s(\alpha)$ and whose size is independent of $\delta$ and $\mu$.
\begin{proposition}\label{lem1}
Let $1<\mu<3$. Then there exists $r>0$ and $\delta^*>0$ such that for all $0<\delta<\delta^*$
we have $\overline{P}(\alpha,r)=\{(x,y)\in \bR^2: |x-x_\alpha|, |y-y_\alpha|\leq r\} \subset W^s(\alpha)$.
\end{proposition}
\begin{proof}
Let $(x,y)=(x_0,y_0)=(x_\alpha+s_0,y_\alpha+t_0)$ and $(x_n,y_n) = F^n(x_0,y_0)= (x_\alpha+s_n,y_\alpha+t_n)$. Thus $(x,y)\in W^s(\alpha)$ if and only if $||(s_n,t_n)||_m \to 0$ as $n\to \infty$ where the maximum norm is used. Let $n\in \bN$. Then 
\[
\begin{split}
(x_{n+1}, y_{n+1}) = & F(x_\alpha+ s_n, y_\alpha+t_n)\\
  =& (\mu (x_\alpha + s_n)(1-(x_\alpha +s_n)) + \delta (y_\alpha + t_n), \delta (x_\alpha + s_n))\\
=   &  ( \mu x_\alpha (1- x_\alpha) +\delta y_\alpha + s_n (\mu -2\mu x_\alpha - \mu s_s) + \delta t_n, \delta x_\alpha + \delta s_n)\\
= & (x_\alpha + s_n (2-\mu - \frac{2\delta^2}{\mu} -\mu s_n)+ \delta t_n, y_\alpha +\delta s_n).
\end{split}
\]
Hence,
\begin{equation}\label{eqgeo1}
(x_{n+1},y_{n+1})-(x_\alpha, y_\alpha)=(s_n (2-\mu - \frac{2\delta^2}{\mu} -\mu s_n)+ \delta t_n,\delta s_n).
\end{equation}
Since  $2-\mu \in (-1,1)$ there exists $0<\gamma<1$ and $r>0$ such that $|2-\mu - \mu s|< \gamma$ for all $|s|\leq r$.
Moreover, an elementary continuity argument shows that there exists $\delta^*>0$ such that for all $0<\delta< \delta^*$ and all $|s|\leq r$ we have
\begin{equation}\label{eqgeo2}
 \left|2-\mu - \frac{2\delta^2}{\mu} -\mu s\right|< \gamma.
\end{equation}
Without loss of generality assume that $\delta^*< \frac{1-\gamma}{2}$. 
We conclude that if $0<\delta<\delta^*$ and $(x_0,y_0) \in \overline{P}(\alpha,r)$
then $||(s_n,t_n)||_m$ converges geometrically to $0$ and thus $(x_0,y_0)  \in W^s(\alpha)$.
\end{proof}
As a consequence of Proposition \ref{lem1} we obtain the following.
\begin{corollary}\label{lemkeg}
Let $1<\mu<3$ and let $h>0$. Then there exist $r>0$ and $\delta^*>0$ such that if $ 0<\delta<\delta^*$, $x_\mu-r\leq x \leq x_\mu+r$ and $0\leq |y|\leq h$ then $(x,y)\in W^s(\alpha)$.
\end{corollary}
\begin{proof}
Note that the radius $r$ in Proposition \ref{lem1} only depends on $\delta^*$ and not on $\delta$. Moreover, $x_\alpha\to x_\mu$ and $y_\alpha\to 0$ as $\delta\to 0$.
Therefore the claim follows immediately from  Proposition 7 and the fact that we can make $\delta y$ as small as necessary by making $\delta^*$ small.
\end{proof}
Note that in Corollary \ref{lemkeg} the value of $\delta^*$ can be chosen to depend continuously on $\mu$.

\begin{proposition}\label{propA}
Let $1<\mu<3$. 
Then there exists $\delta^*>0$ such that for all $ 0<\delta<\delta^*$
we have $A_\delta\subset W^s(\alpha)$.
\end{proposition}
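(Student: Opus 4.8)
The plan is to deduce the statement from Corollary~\ref{lemkeg}. Fix $\mu\in(1,3)$ and let $r=r(\mu)>0$ and $\delta_0=\delta_0(\mu)>0$ be the constants provided by Corollary~\ref{lemkeg} with $h=1$, so that the rectangle $R\eqdef[x_\mu-r,x_\mu+r]\times[0,1]$ is contained in $W^s(\alpha)$ whenever $0<\delta<\delta_0$. Since $W^s(\alpha)$ is $F$-invariant, it suffices to prove that for all sufficiently small $\delta$ the forward orbit of every point of $A_\delta$ eventually meets $R$. The underlying reason is that on the thin strip above $[0,1]\times\{0\}$ the map $F=F_{\delta,\mu}$ acts in the first coordinate as an $O(\delta^2)$-perturbation of the logistic map $P_\mu$, whose restriction to $(0,1)$ is globally attracted to $x_\mu$ because $1<\mu<3$.

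First I would confine the orbits. Since $P_\mu([0,1])=[0,\mu/4]$ with $\mu/4<3/4$, a direct computation shows that, for $\delta$ small, $F(A_\delta)\subseteq Q_\delta\eqdef[0,\mu/4+2\delta^2]\times[0,\delta]$ and $F(Q_\delta)\subseteq Q_\delta$. Hence the forward orbit $(x_n,y_n)=F^n(x_0,y_0)$ of a point $(x_0,y_0)\in A_\delta$ lies in $Q_\delta$ for every $n\ge1$, and there $F(x,y)=(P_\mu(x)+\delta y,\delta x)$ with $0\le\delta y\le\delta^2$; thus $(x_n)_{n\ge1}$ is a one-sided $\delta^2$-pseudo-orbit of $P_\mu$ contained in $[0,\mu/4+2\delta^2]\subset(0,1)$. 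Inspecting separately the points of $A_\delta$ on $\{x=0\}$ and the point $(1,0)$ shows $x_n>0$ for all $n$ beyond some $n_1\le2$, and the inequality $P_\mu(x)+e\ge P_\mu(x)>0$ for $x\in(0,1)$, $e\ge0$, shows positivity then persists; discarding finitely many initial terms (harmless for membership in $W^s(\alpha)$), we may assume $x_n\in(0,1)$ for all $n\ge0$.

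Next I would drive the pseudo-orbit to $x_\mu$. Fix $\epsilon_0=\epsilon_0(\mu)\in(0,x_\mu)$ with $\epsilon_0<P_\mu(\mu/4)$. Since $P_\mu$ is concave its minimum over any interval is attained at an endpoint, and one checks that for $\delta$ small the interval $[\epsilon_0,\mu/4+2\delta^2]$ is forward invariant under the pseudo-orbit, whereas on $(0,\epsilon_0)$ one has $x_{n+1}\ge P_\mu(x_n)\ge\mu(1-\epsilon_0)x_n$ with $\mu(1-\epsilon_0)>1$; hence the first coordinate grows geometrically until it leaves $[0,\epsilon_0)$, which happens after finitely many (point-dependent) steps, and thereafter $x_n$ stays in a fixed compact subinterval $\Gamma_0\subset(0,1)$ that is independent of $\delta$. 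Now I invoke the classical fact that for $1<\mu<3$ the iterates $P_\mu^n$ converge to $x_\mu$ uniformly on compact subsets of $(0,1)$, and I fix $N=N(\mu)$ with $P_\mu^N(\Gamma_0)\subset[x_\mu-r/2,x_\mu+r/2]$. Comparing, for $n$ large, the pseudo-orbit $x_n,x_{n+1},\dots,x_{n+N}$ with the genuine $P_\mu$-orbit of $x_n$ (the true orbit stays in $[0,\mu/4]\subset[0,1]$, on which $|P_\mu'|\le\mu$) yields $\bigl|x_{n+N}-P_\mu^N(x_n)\bigr|\le\delta^2\,(\mu^N-1)/(\mu-1)$, which is $<r/2$ once $\delta$ is small, $N$ being already fixed. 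Hence $x_{n+N}\in[x_\mu-r,x_\mu+r]$; since also $y_{n+N}\le\delta<1$, the orbit has entered $R$, and by the reduction above $(x_0,y_0)\in W^s(\alpha)$.

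The essential difficulty is one of uniformity: the threshold $\delta^*$ must not depend on the chosen point of $A_\delta$. This is why the argument separates two different ``times'': the escape time from $[0,\epsilon_0)$ is allowed to depend on the point (harmless, because $A_\delta\subseteq W^s(\alpha)$ is a pointwise statement), while the shadowing length $N$ must be uniform, and it is, being governed by the locally uniform convergence of $P_\mu^n$ on the fixed compact set $\Gamma_0$. One then takes $\delta^*$ to be the minimum of the finitely many $\mu$-dependent thresholds used above --- forward invariance of $Q_\delta$ and of $[\epsilon_0,\mu/4+2\delta^2]$, the shadowing bound for this fixed $N$, and $\delta_0$ from Corollary~\ref{lemkeg}. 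The one imported ingredient is the global asymptotic stability of $x_\mu$ for $P_\mu$ on $(0,1)$ when $1<\mu<3$ (the first period doubling occurring at $\mu=3$), which for $\mu\le2$ follows from an elementary monotonicity argument and in general is standard one-dimensional dynamics.
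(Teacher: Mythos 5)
Your proposal is correct and follows essentially the same route as the paper: reduce to Corollary~\ref{lemkeg}, trap the orbit in a thin strip over $[0,1]$, use the expansion of $P_\mu$ near $0$ to escape a neighborhood of the origin in a point-dependent time, then apply a uniform iterate count on a compact subinterval of $(0,1)=W^s(x_\mu)$ and absorb the $O(\delta^2)$ perturbation for small $\delta$. The only difference is cosmetic: you control the perturbation by an explicit pseudo-orbit shadowing estimate, whereas the paper notes that $F^m$ differs from $P_\mu^m$ by polynomial terms whose coefficients carry positive powers of $\delta$.
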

\begin{proof}
Let $\delta^*, h$ and $r$ be as in Corollary \ref{lemkeg}. Moreover, we may assume  that $\delta^*<\frac{1}{4}$. Let $0<\delta<\delta^*$.
Obviously, $P_\mu([0,1])\subset [0,\frac{3}{4})$ which implies that $F(A_\delta)\subset A_\delta$.
We have $P_\mu'(0)=\mu>1$. Thus, by continuity there exists
 $\rho_l>0$ such that $P_\mu'(x)\geq \frac{\mu+1}{2}>1$ for all $0\leq x\leq \rho_l$. Since $x_\mu$ is an attracting fixed point of $P_\mu$ we must have $\rho_l<x_\mu$.
Let $(x,y)\in A_\delta$ with $0\leq x\leq \rho_l$. If $x=0$ then $x_1>0$ and therefore we may assume  that $x>0$. It follows from the Mean Value Theorem
that
\begin{equation}
x_1=P_\mu(x)+\delta y\geq P_\mu(x)-0\geq \frac{\mu+1}{2} x.
\end{equation}
It now follows by induction that there exists $n\in \bN$ such that $x_n\geq \rho_l$. \\
\noindent
On the other hand, if $(x,y)\in A_\delta$ then  
\begin{equation}
x_1=P_\mu(x)+\delta y< \frac{3}{4}+2\delta^2<\frac{7}{8}.
\end{equation}
We define $\rho_u=\frac{1}{8}$. The above shows that in order to  prove the claim it is  sufficient  to consider $(x,y)\in A_\delta$ with $\rho_l\leq x\leq \rho_u$.
First, we  consider the dynamics of $P_\mu$ on  $[\rho_l,\rho_u]$.
Since each $x\in [\rho_l,\rho_r]$ is contained in $W^s(x_\mu)$ (with respect to $P_\mu$) and since  $[\rho_l,\rho_r]$ is compact there exists $m\in \bN$ such that $P_\mu^m(x)\in (x_\mu-r,x_\mu+r)$ for all 
$x\in [\rho_l,\rho_r]$. Note that
\begin{equation}
F^m(x,y)=(P_\mu^m(x)+P(x,y),Q(x,y)),
\end{equation}
for some polynomials $P$ and $Q$ in two variables. Moreover, each of the coefficients of $P$ and $Q$ contains positive powers of $\delta$.
Thus, by making $\delta^*$ smaller if necessary, we can assure that if $(x,y)\in A_\delta$ with $\rho_l\leq x\leq \rho_u$ then $x_m\in (x_\mu-r,x_\mu+r)$ and the claim follows.
\end{proof}
\noindent
Next we will investigate the negative part of the strip
\begin{equation}
 S_{\delta} = \{(x, y): -\infty < x < \infty, 0 < y < 2\delta\},
 \end{equation}
 i.e. $x \leq 0, 0< y < 2\delta$, which contains a portion of $W^s(0, 0)$. In particular, we show that the local stable manifold of the orgin is contained in the boundary of the basin of attraction of $\alpha$.
\begin{theorem}\label{thwsloc}
 Let $1<\mu<3$ and let $\beta=\delta(\mu-1)^{-1}.$ Then  there exists $\delta^*>0$ such that for all $ 0<\delta<\delta^*$  
the following holds:
\begin{enumerate}
\item[(i)]
For all $0\leq \ov{y} \leq 2\delta$ there exists a unique
$ -\beta \ov{y} \leq \ov{x}\leq 0$ such that $(\ov{x},\ov{y})\in W^s(0,0)$.
\item[(ii)]
If $0< \ov{y}\leq 2\delta$ then $-\beta \ov{y}<\ov{x}<0$;
\item[(iii)]
Let $0< \ov{y}\leq 2\delta$. If $\ov{x}<x\leq 0$ then $(x,\ov{y})\in W^s(\alpha)$, and if $x<\ov{x}$ then $(x,y)\in W^s(\infty)$.
\end{enumerate}
\end{theorem}
\begin{proof}
Let $\delta^*$ be as in Proposition \ref{propA}. 
We first prove the existence in (i). The uniqueness in (i) will follow from (iii). If $\ov{y}=0$ then $\ov{x}=0$. Assume now $0< \ov{y}\leq 2\delta$.
We define
\begin{equation}
\ov{x}=\inf\{x\leq 0: (x,0]\subset W^s(\alpha)\}.
\end{equation}
By Proposition \ref{propA}, $(0,\ov{y})\in W^s(\alpha)$. Since $W^s(\alpha)$ is open we may conclude that $\ov{x}<0$. It follows from the definition of $\ov{x}$
that $ (\ov{x},\ov{y})\in \partial W^s(\alpha)$. 
Moreover, since $W^s(\infty)$ is  open we obtain $  (\ov{x},\ov{y})\not\in 
W^s(\infty)$.
Combining this with Lemma \ref{betalem} yields $\ov{x}>-\beta \ov{y}$. Thus, (ii) holds.
For $n\in \bN$ we write $(x_{n},y_n)=F^n( \ov{x},\ov{y})$. In particular,
\begin{equation}\label{eq2n}
(x_{2n},y_{2n})=(P_\mu(P_\mu(x_{2n-2})+\delta y_{2n-2})+\delta^2 x_{2n-2}, \delta(P_\mu(x_{2n-2})+\delta y_{2n-2})).
\end{equation}
We define $C=C(\delta)=\sup \{|P'_\mu(x)|: (x,y)\in C_\delta\}$. Clearly, $C(\delta)\to \mu$ as $\delta\to 0$. Since $\beta\to 0$ as $\delta\to 0$,
 we can assure (by making $\delta^*$ smaller if necessary) that 
\begin{equation}
\delta(C\beta+\delta)<\frac{1}{2}.
\end{equation}
We claim that $(x_{2n},y_{2n})\in C_\delta$ for all $n\in \bN_0$. Since $(x_0,y_0)=(\ov{x},\ov{y})$ the claim holds for $n=0$. Assume now
that the claim holds for $n-1$. Then $y_{2n}\geq 0$.  Otherwise $(x_{2n},y_{2n})$ would be contained in the third quadrant and thus by Lemma
\ref{betalem}   in $W^s(\infty)$. We obtain 
\begin{equation}\label{eqconv}
\begin{split}
|y_{2n}|&=\delta \ |P_\mu(x_{2n-2})+\delta y_{2n-2}|\leq \delta\ (|Cx_{2n-2}|+|\delta y_{2n-2}|)\\
& \leq \delta (C\beta+\delta)\ |y_{2n-2}|<\frac{1}{2}\ |y_{2n-2}|.
\end{split}
\end{equation}
Finally, if $x_{2n}< -\beta y_{2n}$ then by Lemma \ref{betalem} we would have $(x_{2n},y_{2n})\in W^s(\infty)$ which is a contradiction to 
$(x_{2n},y_{2n})\in \partial W^s(\alpha)$ and the claim is proved.
Since  $-\beta y_{2n}\leq x_{2n}\leq 0$, equation \eqref{eqconv} proves that $(\ov{x},\ov{y})\in W^s(0,0)$.

It remains to prove (iii). Let $0<y\leq 2\delta$.
That  $(\ov{x},0]\times \{\ov{y}\}\subset W^s(\alpha)$ is a direct consequence of the definition of $\ov{x}$. 
We now consider the case $x<\ov{x}$. If $x<-\beta y$ then $(x,y)\in W^s(\infty)$ by Lemma \ref{betalem}.
Assume now $(x,y)\in C_\delta$. It follows from \eqref{eq2n} (with $(\ov{x},\ov{y})$ replaced by $(x,y)$) and Lemma \ref{betalem} that
if $y_{2n}<0$ 
for some $n\in \bN$ then $(x,y)\in W^s(\infty)$. It remains to consider the case $y_{2n}\geq 0$ for all $n\in \bN$.
Similar to the case of $(\ov{x},\ov{y})$ it follows from \eqref{eqconv}  that $y_{2n}\to 0$ as $n\to\infty$. 
Note that $\inf \{|P'_\mu(x)|: (x,y)\in C_\delta\}=\mu>1$. It now follows by induction and by using \eqref{eq2n} 
that $x_{2n}< -\beta y_{2n}$ for some $n\in \bN$. Therefore, Lemma \ref{betalem} implies $(x,y)\in W^s(\infty)$ and the proof is complete.
\end{proof}
As a result of the Theorem \ref{thwsloc}, all points $(x, y) $ with $x <0$ and $0< y < 2\delta$ have forward orbits which escape to $\infty$ or converge to the attracting fixed point $\alpha$, depending on whether they lie to the left of $W^s(0,0)$ or to the right. The only partition of the strip $-\infty < x < \infty, 0 < y < 2\delta$ not dealt with yet
is the set of points $(x, y)$ with $ 1 < x < 2, 0 < y < 2\delta$. It will be shown that those points have forward orbits converging to  $\alpha$ or escaping to $\infty$ according to whether they lie to the left of $W^s(0,0)$  or to the right.\\
Before we attack that proof, an interesting property of the map $\ov{y}\mapsto \ov{x}$ given in Theorem \ref{thwsloc}  can be derived.

Note that for small $y$ values part (i) of Theorem \ref{thwsloc} also follows from the Stable Manifold Theorem applied to the saddle point $(0,0)$. For our purposes
however, it is crucial to have a uniform estimate from below for the size of the local stable manifolds. On the other hand, the Stable Manifold Theorem
implies the following:
\begin{corollary}
Under the assumptions of Theorem \ref{thwsloc} the map $\ov{y}\mapsto \ov{x}$ is real analytic.
\end{corollary}
\begin{proof}
By the Stable Manifold Theorem there exists $\eta>0$ such that 
\[W^s_{\rm loc}(0,0)\cap \{(x,y): y\geq 0\}=\{(\ov{x},\ov{y}): 0\leq \ov{y}< \eta\}.
\]
The statement now follows from the fact that
\[
F^{2n}(\{(\ov{x},\ov{y}): 0\leq \ov{y}\leq 2\delta\})\subset  
W^s_{\rm loc}(0,0)\cap \{(x,y): y\geq 0\}
\]
for some $n\in \bN$.
\end{proof}
Finally, we can treat the last part of the strip $S_{\delta}$, namely points $(x, y)$ with  $1 \leq x \leq 2, 0 < y < 2\delta $:  
\begin{theorem}\label{thws2}
 Let $1<\mu<3$. Then  there exists $\delta^*>0$ such that for all $ 0<\delta<\delta^*$  
the following holds:
\begin{enumerate}
\item[(i)]
For all $0\leq \ov{y} \leq 2\delta$ there exists a unique
$1<\ov{x}<2$ such that $(\ov{x},\ov{y})\in W^s(0,0)$. Morover, $\ov{y}\mapsto \ov{x}$ is real-analytic.
\item[(ii)]
Let $0\leq \ov{y}\leq 2\delta$. If $1\leq x<\ov{x}$ then $(x,\ov{y})\in W^s(\alpha)$, and if $\ov{x}<x\leq 2$ then $(x,\ov{y})\in W^s(\infty)$.
\end{enumerate}
\end{theorem}
\begin{proof}
The rectangle $B_{\delta} = \{ (x, y): 1 \leq x \leq 2, 0 \leq y \leq 2\delta\}$  for $ \delta >0$ has an image $F(B_{\delta})$ which is a topological rectangle. Let $ 0\leq \ov{y} \leq 2 \delta$ be fixed. The curve $F(x, \ov{y})$, $1\leq x \leq 2$, which is the image of a horizontal line, is a parabola in that topological rectangle opening to the left. It is parallel to the parabolas which are the images of $F(x, 0)$ and $F(x, 2\delta)$, $1\leq x \leq 2$ . Let $Q =  W^s(0, 0) \cap F(\ov{x}, \ov{y})$ be the intersection of the parabola  $F(x, \ov{y})$, $1\leq x \leq 2$, and the local stable manifold of the origin. Then $Q' = F^{-1}(Q) = (\ov{x}, \ov{y})$ is in $ W^s(0, 0)$. If $1\leq x < \ov{x}$, then the $x-$coordinate of $Q_1 = F(x, \ov{y})$ is larger than that of $Q$, whereas its $y$-coordinate is smaller than that of $Q$, implying it is to the right of $W^s(0, 0)$ and thus in the basin of attraction $W^s(\alpha)$ by Theorem \ref{thwsloc}. Similarly, if $\ov{x} < x \leq 2$, then $(x, \ov{y})$ is in $W^s (\infty)$, since the $y$-coordinate of $Q_1$ is larger than that of $Q$ whereas the $x$-coordinate is smaller than that of $Q$.
\end{proof}
Using the previous results, the filled Julia set $K$ for the map $F_{\delta, \mu}$ must be below the line $y = 2 \delta$.  The behavior of all forward orbits of points in the strip $S_{\delta} $ can be described for small $\delta$ as follows.
\begin{proposition}\label{propallorbitsS}
Let $1 < \mu < 3$. There exists a $\delta^* > 0$ such that for all $0 < \delta < \delta^*$ the forward orbit under $F_{\delta, \mu}$ of every point $(x, y)$ with  $-\infty < x < \infty, 0 < y < 2\delta$
converges either to $(0,0)$, to $\alpha$ or to $\infty$.
\end{proposition}
\begin{proof}
Theorem \ref{thwsloc} describes the behavior of forward orbits for points  with $x \leq 0$.
For $0 < x \leq 2$, the statement follows from Proposition \ref{propA} and Theorem \ref{thws2}. Finally, Lemma \ref{lem5} establishes the claim for $x > 2$.
\end{proof}
To summarize, for all points in the upper half plane we know the fate of either the forward or the backward orbits. Now we will concentrate on the points below the $x$-axis. As a result of Lemma \ref{betalem}, the third quadrant is in $W^s(\infty)$. We now investigate the other possibilities.

\begin{proposition}\label{prop16} Let $1<\mu< 3$. Then
\begin{enumerate}
\item[(i)] If $0<\delta<\delta^*\eqdef\sqrt{3\mu(\mu-1)}$ and $(x,y)\in \bR^2$ with $x\geq 2$ and $y\leq 0$ then $(x,y)\in W^s(\infty)$.
\item[(ii)]  If $0<\delta\leq 1$ and $(x,y)\in \bR^2$ with $0\leq x\leq 2$ and $y\leq \-\delta_0 =  -\delta^{-1}\left(\frac{2\delta^2}{\mu-1}+1\right)$ then $(x,y)\in W^s(\infty)$.
\end{enumerate}
\end{proposition}
\begin{proof}

(i) Suppose $x\geq 2$ and $y\leq 0$. Then $P_\mu(x)\leq -\mu x$. Hence,
\[
x_1=P_\mu(x)+\delta y \leq -\mu x < - \frac{\delta^2}{\mu-1} x= -\beta
\delta x=-\beta y_1.
\]
By Lemma \ref{betalem}, $(x_1,y_1)=F(x,y)\in W^s(\infty)$ which
proves (i).\\
\noindent
(ii) Let $(x,y)\in \bR^2$ with $0\leq x\leq 2$ and $y\leq
-\delta^{-1}\left(\frac{2\delta^2}{\mu-1}+1\right)$. Then
\[
x_1=P_\mu(x)+\delta y\leq 1+ \delta y \leq
-\frac{2\delta^2}{\mu-1}\leq -\beta \delta x= -\beta y_1.
\]
Therefore, the same argument as in the proof of (i) shows $(x,y)\in
W^s(\infty)$.
\end{proof}
In order to determine the behavior of the forward orbits for the remaining part of the fourth quadrant, 
we now investigate points $(x, y)$ on the stable manifold of the origin satisfying $0\leq x \leq 2$.

In the following we introduce some notation that will be used later.
Let $Q =  W^s(0, 0) \cap F(\ov{x},0)$ be the intersection of the parabola  $F(x, 0), 1<x<2, $ and the local stable manifold of the origin. Let  $W'$ be the part of  $W^s(0, 0)$ connecting $Q$ to the origin and let $Q' = F^{-1}(Q) = (\ov{x}, 0)$.  Then $C = F^{-1}(W')$ is a curve connecting  $Q' $ to the origin and is contained
in $W^s(0, 0)$. 
We claim that except for the two endpoints,  the curve $C$ is below the x-axis. To show the claim, we consider first  $(x, y) \in W'$ with $y \leq \delta$. Because $x < 0$ and  $F^{-1}(x, y) = \frac{1}{\delta}(y, x + \frac{\mu y(y - \delta)}{\delta^2})$, it follows that  $F^{-1}(x, y)$ is below the x-axis.
In particular,  $P' = F^{-1} (P) = (1, \frac{x}{\delta}) \in C$ for $P = (x, \delta) \in W'$  is below the x-axis as well as on $C$. 
If there would exist a point $(x, y)$ on $C$ with $y > 0 $ and $1< x < \ov{x}$, then
by the Intermediate Value Theorem there would have to be a point on the interval $(1,x )$ which is contained in $W^s(0, 0)$. But this is a contradiction, since
the entire interval $(0, \ov{x})$  is in $W^s(\alpha)$ by Proposition \ref{propA} and Theorem \ref{thwsloc}. 
\begin{lemma}\label{lem17} The region below $C$ in the fourth quadrant  is in $W^s (\infty)$. The region above $C$ and below the interval $(0, \ov{x})$ on the $x$-axis is contained in $W^s(\alpha)$.
\end{lemma}
\begin{proof}
Let $0<x \leq \ov{x} $ be fixed with $Q' = (\ov{x}, 0)$ on $W^s(0,0)$.   The vertical half-line $(x, y), y \leq 0 $ is mapped by $F$ to the horizontal half-line  $(z, \delta x), z\leq P_\mu(x)$. Let  $(x, y_0) $ be the intersection of $C$ and the vertical  half-line. Then  $( x, y_0)$  and   $F(x, y_0) $ are  on  $W^s(0, 0)$.
The horizontal half-line starts in the basin of attraction $W^s(\alpha)$. All points $(x, y)$ with  $ y_0 < y < 0$ are also in that basin by Theorem 12, since  the first coordinate of $F(x, y) = (P_\mu(x) + \delta  y, \delta x)$ is larger than that of $F(x, y_0)$. Similarly, if $y<y_0$, then all points $(x, y)$  are in $W^s (\infty)$ by Theorem \ref{thwsloc}.
\end{proof}
We are now able to provide a description of all forward orbits of   points below the $x$-axis for $1 < \mu < 3$ and  small $\delta$.

\begin{proposition} \label{prop17} There exists  $\delta^* > 0$ such that for all $0 < \delta < \delta^*$ the forward orbit under $F_{\delta, \mu}$ of every point $(x, y)\in \bR^2 $ with $-\infty<x<\infty, y<0$  converges either to $(0,0),$ to $\alpha$ or to $\infty$.
\end{proposition}
\begin{proof}
All points in the third quadrant  escape to infinity under forward iteration by Lemma \ref{betalem}. If $0<x<\ov{x}$, points $(x, y)$ with $y<0$ which are not on $C$ are either in the basin of attraction of $\alpha$ or escape to infinity according to whether they are above $C$ or below $C$ by Lemma \ref{lem17}. Obviously, points on $C$ converge to the origin. Theorems \ref{thwsloc} and  \ref{thws2} show that points $(x, y)$ for $ \ov{x} <  x< 2 $ and $y < 0$ also escape to infinity.
\end{proof}
It is now obvious that the forward orbit of every point in the real plane under $F_{\delta, \mu}$ converges either to 
$(0,0)$, to $\alpha $ or to $\infty$ for  $1<\mu <3$ and $\delta=\delta(\mu)$ small.

\begin{lemma} \label{lem19} Let $1 < \mu < 3$. Then there exists  $\delta^* > 0$ such that for all $0 < \delta < \delta^*$.
the map $F_{\delta, \mu}$ satisfies properties (i) and (ii) of Theorem 1.
\end{lemma}

\begin{proof}
(i) Let $1 < \mu < 3$ and  let $r$ be as in Proposition \ref{lem1} with $r < 1$. If $0 < \delta < \delta^*$ for  $\delta^*$  as in Propositions \ref{propallorbitsS} and \ref{prop17} and  $(\delta,\mu')\in (\bR\setminus \{0\})\times \bR$ with $||(\delta,\mu')-(0,\mu)||<r$. 
Then every forward orbit of $F_{\delta, \mu'}$ converges to either to $0$, to  $\alpha$  or to $\infty$. Consequently, there are no periodic points other than the two fixed points $(0,0)$ and $\alpha$.\\
(ii) It suffices to show that in a neighborhood $U$ of the origin, the boundary  $\partial W^s(\alpha)$ of the basin of  $\alpha$ is the stable manifold $W^s(0,0)$ of the origin. Without restriction every point in $U$ converges under forward iteration either to $0$, to $\alpha$  or to $\infty$. Let  $(x, y)$  be in  $U$ and on  $W^s(0,0)$.  Since all points which are to the right of  $W^s(0,0)$ are in  $W^s(\alpha)$ by Theorem \ref{thwsloc}, obviously  $(x, y)$ is in $\partial W^s(\alpha)$. 
On the other hand, let $(x, y)$ be in $U$ and on  $\partial W^s(\alpha)$. If the point $(x, y)$ were not on $W^s(0,0)$, then it must be in the open set $W^s (\infty)$ contradicting the fact that it is on the boundary $\partial W^s(\alpha)$.
\end{proof}
To prove part (iii) of Theorem 1, we now describe the backward orbits  of $F_{\delta, \mu}$ for $1 < \mu < 3$ and small $\delta$. The next lemma shows that the backward orbits of points on $W^s(0,0))\setminus\{0\}$ escape which will be proved by using the standard trapping regions $V^+$, $V^-$ induced by a closed square $V$ of side length $r$ centered at the origin for an appropriate $r$ (see [BS, Lemma 2.1] and [FM]). In particular, $W^u(\infty)\subset V^+$, $W^s(\infty)\subset V^-$.
\begin{lemma}\label{lem19}
  $W^s(0,0))\setminus \{(0,0)\}\subset W^u(\infty)$.
\end{lemma}
\begin{proof}
Let  $W^s_+(0,0)$ respectively  $W^s_-(0,0)$ denote the two connected
components of $W^s(0,0)\setminus \{0\}$ whose existence is guaranteed by the Stable Manifold
Theorem. We use the immersed  topology on these components.
Note that this topology does not necessarily
 coincide with the relative topology induced by the topology of
$\bR^2$.  Define
$V^+_s=W^s(0,0)\cap V^+$. Then $V^+_s\subset
W^u(\infty)$. Moreover, since $W^s(0,0)$ is the boundary of the basin of
attraction of an attracting fixed point, and this basin is an unbounded
set contained in $V\cup V^+$, we conclude that
$V^+_s\not=\emptyset$. By making the radius defining $V$ larger if
necessary we may assume that $V^+_s\cap W^s_+(0,0)$ and $V^+_s\cap
W^s_-(0,0)$ are connected sets. 

Let $F= F_{\delta, \mu}$. Since $F^{-2}(V^+_s)\subset V^+_s$, we conclude that
$\bigcup_{n=1}^\infty F^{2n}(V^+_s)$ and $\bigcup_{n=1}^\infty F^{2n}(V^-_s)$ two  increasing unions  of connected curves. Moreover, these two unions are disjoint.
 Here we also use the fact that the
stable eigenvalue of the saddle point $0$ is negative and hence
$W^s_+(0,0)$ and  $W^s_-(0,0)$ are $F^{2n}$-invariant sets.
Consider $p\in W^s(0,0)$ with $p\not=0$. To prove the claim it suffices
to show that $p$ is contained in one of these two unions. Moreover, since $F^k(p)\in W^s_{\rm loc}(0,0)$
for some $k\in \bN$ it is enough to consider the case $p\in W^s_{\rm
loc}(0)$. Let now $n\in \bN$ be such that $F^{2n}(V^+_s)$ contains points
$p_1,p_2\in W^s_{\rm loc}(0,0)$ on both sides of $(0,0)$ which are closer to
$(0,0)$ than $p$. It now follows from the Stable Manifold Theorem that $p$
must be contained in one of the two curves in $F^{2n}(V^+_s)$. This
implies that  $p\in W^u(\infty)$.
\end{proof}

\begin{lemma}Let $1 < \mu < 3$. There exists  $\delta^* > 0$ such that for all $0 < \delta < \delta^*$
the map $F_{\delta, \mu}$ satisfies property (iii) of Theorem 1, that is
 $ K = \{\alpha,(0,0)\}\cup (W^s(\alpha)\cap W^u(0,0))$.
 \end{lemma}
 \begin{proof}
Obviously $W^s(\alpha)\cap W^u(0,0)$ is contained in  $ K $ as well as are the fixed points $\alpha$ and $(0,0)$. Let $q \in  K \setminus\{\alpha,(0,0)\}$.
By Propositions 13 and 16 the forward orbit of $q$ must converge to either $(0,0)$ or $\alpha$, i.e. $q $ is in $ W^s(0,0)$ but is not the origin or  $q $ is in $ W^s(\alpha)$ but is not $\alpha$. However, Lemma 18 implies that only the second case is possible, since the backward orbit of $q$ is bounded.

Now let $q$ be in the basin of attraction $W^s(\alpha)$ as well as in $K$ with $q \not=\alpha$. We will show that $q \in W^u(0,0)$.
Let $q_1$ be an arbitrary accumulation point of the backwards orbit $(F^{-n}(q))$, where $F = F_{\delta, \mu}$. Since $K$ is closed we conclude that $q_1 \in K$. Hence the forward orbit of $q_1$ must converge to either $(0,0)$, or $ \alpha $, again by Propositions 13 and 16.

 If $q_1\in W^s(0,0)$, then $q_1 = (0,0)$, because  otherwise $q_1\in W^u(\infty)$ by Lemma 18 which  contradicts the fact that $q_1 \in K$.
Since $q_1= (0,0)$ is an arbitrary accumulation point of  $F^{-n}(q)$, it follows that $q \in W^u(0,0)$ once we  show that $q_1\in W^s(\alpha)$ is not possible.

If $q_1 \in W^s(\alpha), q \not = \alpha$, then the backward orbit $(F^{-n}(q))_{n\geq 0}$ cannot have an accumulation point, because there is a sequence of mutually disjoint sets $A_n$ with  $F^{-1}(A_n) \subset A_{n+1}$ such that $W^s(\alpha) = \bigcup_{n\geq 0} A_n$.
\end{proof}

\begin{remarks} (i) It can be shown that the obtained $\delta^*$ can be derived as a continuous function of $\mu$. Therefore, if $1<\mu_0<3$ we can formulate Theorem 1 to be true for all parameters in the set $\{(\delta,\mu): 0<\delta<\delta^*, |\mu-\mu_0|<r\}$ for some $\delta^*>0$ and some $r>0$. \\
(ii)
Computer experiments suggest that Theorem 1 might be true for all $(\delta,\mu)\in H_{1,1}$ (see \eqref{H11} for the definition). However, our results are based on small perturbation techniques and do not apply to consider arbitrary parameters in $H_{1,1}$.
\end{remarks}  

\section{The general case}
In this section we consider  general maps of the form
\begin{equation}\label{eqgenfam1}
F(x,y)=(g(x)+h(y),h(x)).
\end{equation}
Here $g,h:\bR\to\bR$ are $C^2$-maps, where $g$ is a unimodal map whose graph has the qualitative shape of the logistic function $x\mapsto \mu x(1-x)$, $1<\mu<3$ and $h$ is a small perturbation of the linear map $L_\delta$. More precisely, we assume that $g:\bR\to \bR$ is a $C^2$-unimodal map with $g(0)=0, g(1)=0, g([0,1])\subset [0,1)$, $g'(0)>1$ and $g'(1)<-1$. We also assume that there exists $\gamma<0$ such that $g''(x)<\gamma$ for all $x\in \bR\setminus (0,1)$. Finally, we assume that there exists an attracting fixed point $x=x_g\in (0,1)$ of $g$ with $W^s(x)=(0,1)$.  For $\delta\in \bR$ we denote by $L_\delta$ the linear map $x\mapsto \delta x$.
We now discuss the proof of Theorem \ref{thmgeneral}. 
Since the arguments for the general case are obvious adaptations of the case for H\'enon maps, only a sketch will be outlined.

\noindent
{\it Proof of Theorem {\rm 3}. } 
(i)  If $\delta>0$ and $0<\epsilon<\delta/2$ it follows that every $C^2$-map $h:\bR\to\bR$ with $||h-L_\delta||_2<\epsilon$  is a $C^2$-diffeomorphism of $\bR$. Therefore,
it follows from \eqref{eqgenfam1} that $F$ is a bijective $C^2$-map of $\bR^2$. The statement that $F$ is a $C^2$-diffeomorphism of $\bR^2$ is now a consequence of the inverse mapping theorem.\\
(ii),(iii),(iv)  First, we note that if $\delta> 0$ is small enough then the origin is a saddle fixed point of $F$ and $F$ has an attracting fixed point $\alpha=\alpha_F$ that is close to $(x_g,0)$. Since $g((1,\infty))\subset (-\infty,0)$ and for all $x<0$ we have that $g^n(x)$ converges to $-\infty$ at a geometric rate,  it is straight forward to verify that analogous filtration properties to those in Lemmas 5 and 6 and Proposition 14 hold. Furthermore, since $W^s(x_g)=(0,1)$,  we can show that for small $\delta> 0$ the set $A_\delta$ (defined in \eqref{Adelta}) is contained in the basin of attraction of $\alpha$. The two key
results (Theorems \ref{thwsloc} and \ref{thws2}) which describe the iterates of $F$ near the origin (respectively near the intersection of $F^{-1}(W^s_{\rm loc}(0,0))$ with the $x$-axis) are  based on the geometric shape of the graph of $g$ near zero and can be proven accordingly. Finally, the corresponding results to Lemmas 17, 18 and 19 can be proven using the qualitative shape of $g,g'$ and $g''$ rather than the explicit formulas. These results show that Theorem 3 can be established by repeating the arguments of the proof of Theorem 1.

\end{document}